\newtheorem{theorem}{Theorem}[section]
\newtheorem{lemma}[theorem]{Lemma}
\newtheorem{definition}[theorem]{Definition}
\newtheorem{corollary}[theorem]{Corollary}
\newtheorem{example}[theorem]{Example}
\author[M. B\'ona]{Mikl\'os  B\'ona}
\title[On  Conjectures of Joel Lewis]{On a Family of 
  Conjectures of Joel Lewis on Alternating Permutations}
\address{\rm M. B\'ona, Department of Mathematics, 
University of Florida,
358 Little Hall, 
PO Box 118105, 
Gainesville, FL 32611--8105 (USA)
}
\date{}
\begin{document}
\maketitle

\begin{abstract} We prove generalized versions of
 some conjectures of Joel Lewis 
on the number of alternating permutations avoiding certain
patterns. Our main tool is the perhaps surprising observation that
a classic bijection on pattern avoiding permutations often preserves
the alternating property.
\end{abstract}

\section{Introduction} The theory of pattern avoiding permutations has
seen tremendous progress during the last two decades. The key definition is
the following. Let $k\leq n$, let $p=p_1p_2\cdots p_n$ be a permutation of
 length $n$, and
let $q=q_1q_2\cdots q_k$ be a permutation of length $k$. We say that $p$
 avoids $q$ if there 
are no $k$ indices $i_1<i_2<\cdots <i_k$ so that for all $a$ and $b$, 
the inequality $p_{i_a}<p_{i_b}$ holds if and only if the inequality $q_a<q_b$
holds. For instance, $p=2537164$ avoids $q=1234$ because $p$ does not contain
an increasing subsequence of length four. See \cite{combperm} for an overview
of the main results on pattern avoiding permutations. 

Recently, there has been an interest to extend the study of pattern avoiding
permutations to alternating permutations. 
A permutation $p=p_1p_2\cdots p_n$ is called {\em alternating} if
 $p_1<p_2>p_3<p_4>\cdots$, that is, 
if $p_i<p_{i+1}$ if and only if $i$ is odd. In \cite{lewis}, Joel Brewster
 Lewis has made a number of interesting
conjectures on the numbers $A_n(q)$ of alternating permutations of length 
$n$ that avoid a given pattern $q$.
In particular, he conjectured that for all positive integers $n$, the
 equalities 
\begin{equation} A_{2n}(1234) = A_{2n}(1243), \end{equation}
and 
\begin{equation} A_{2n}(12345) = A_{2n}(12354) \end{equation}
hold.  In this paper, we prove a general version of these conjectures, showing
 that for all  $n$ and for all $k$,
the equality
\begin{equation} \label{general}
 A_{2n}(12\cdots k) = A_{2n}(12\cdots k(k-1)) \end{equation}
holds. We also explain why the same equality does not hold if $2n$ is 
replaced by $2n+1$. On the other hand, a slight modification of our method 
will  show that
for all $n$ and for all $k$, the equality 
\begin{equation} \label{modgeneral} A_{n}(12\cdots k)=A_{n}(21\cdots (k-1)k)
\end{equation} holds. The special case of $k=4$, that is, 
the equality $A_{n}(1234)=A_{n}(2134)$ was conjectured by Joel Lewis
in \cite{lewis}.

\section{A classic bijection}

In this section, we review a classic bijection of Julian West that will be
 useful for us.  We point out that in this section, our permutations
do {\em not} have to be alternating. One crucial definition is the following. 

\begin{definition}
The {\em rank} of an entry of a permutation is the length of the longest 
increasing subsequence that ends in that entry. 
\end{definition}

For instance, in $p=3526174$, entries 3, 2, and 1 are of rank one, entries 5 
and 4 are of rank two, entry 6 is of rank
three, and entry 7 is of rank 4. It is straightforward to prove that entries 
of the same rank always form a decreasing
subsequence.
It is also easy to see  that if a permutation $p$ avoids the 
 increasing pattern $12\cdots k$, then 
all entries of $p$ have rank $k-1$ or less. 

For any permutation pattern $q$, let $S_n(q)$ denote the number of
 permutations of length $n$
(or, in what follows, $n$-permutations) that avoid the pattern $q$. 

\begin{lemma} \label{west} \cite{West}
Let $k\geq 3$ be an integer. Then for all positive integers $n$, 
the equality $S_n(12\cdots k)=S_n(12\cdots k(k-1))$ holds. 
\end{lemma}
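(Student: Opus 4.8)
The plan is to exhibit an explicit bijection $\phi$ from the set of $n$-permutations avoiding $12\cdots k$ to the set of $n$-permutations avoiding $12\cdots k(k-1)$, using the rank statistic as the organizing principle. Call an entry \emph{low} if its rank is at most $k-2$ and \emph{high} if its rank is at least $k-1$. The first thing I would record is that the low entries form an intrinsic object: an increasing subsequence ending at a low entry can never pass through a high entry, since that would force the low entry to have rank at least $k$. Consequently the sub-permutation induced by the low entries depends only on the low entries among themselves (it is itself a permutation avoiding $12\cdots(k-1)$), and in particular it is unaffected by any rearrangement of the high entries. This is what lets me define $\phi$ by fixing the low entries, both their positions and their values, and merely reassigning the high values among the high positions.

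The model case is $k=3$, where \emph{low} means rank $1$, i.e.\ a left-to-right minimum, and $\phi$ is the classical Simion--Schmidt map: keep the left-to-right minima fixed and fill the remaining slots with the remaining values in the unique order that avoids the target pattern. The general statement I would prove is the analogous \emph{unique completion} property: for a fixed admissible low part, there is exactly one way to place the high values in the high positions so that the resulting permutation avoids $12\cdots k$, and exactly one way so that it avoids $12\cdots k(k-1)$. Granting this, $\phi$ sends a $12\cdots k$-avoider to the unique $12\cdots k(k-1)$-avoider with the same low part, its inverse re-sorts the high values into the $12\cdots k$-avoiding arrangement, and since the low part is preserved the two maps are mutually inverse. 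As each admissible low part then contributes exactly one permutation to each class, this yields the stated equality of $S_n$.

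The main obstacle is precisely the verification hiding inside this clean picture, and it is entirely a matter of careful rank bookkeeping. I must show three things: that the set of low parts arising from $12\cdots k$-avoiders coincides with the set arising from $12\cdots k(k-1)$-avoiders; that for each such low part the completion in each class exists and is unique; and, most delicately, that the reassignment of high values does not alter which entries are low, so that high entries stay high and the partition, hence the inverse map, is well defined. The danger is that moving the high values could change the ranks of other entries; ruling this out requires tracking, for each high position, exactly how long an increasing chain through the low entries can reach it, and checking that the prescribed placement prevents any such chain of length $k-1$ from being extended in the forbidden way. This is where the hypothesis $k\geq 3$ and the fact that entries of a common rank decrease do the real work. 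An alternative that I would keep in reserve is West's original generating-tree argument \cite{West}: insert a new largest entry into each permutation, use the rank profile to label each node by its number of legal insertion sites, and check that both pattern classes obey the same succession rule, so that isomorphic generating trees give the equality of $S_n$ level by level without constructing $\phi$ explicitly.
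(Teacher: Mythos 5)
Your plan is the same as the paper's: fix the entries of rank at most $k-2$ and show each pattern class has exactly one canonical completion of a given low part, generalizing Simion--Schmidt. However, two of the claims you build on are false as stated. First, the assertion that the low sub-permutation is ``unaffected by any rearrangement of the high entries'' does not hold: your argument that chains ending at low entries avoid high entries uses the fact that the high entries currently have high rank, and an arbitrary rearrangement can destroy that. For $k=3$, rearranging the high values of $563412$ (high values $6,4,2$ in positions $2,4,6$) to get $523416$ puts $2$ before the prescribed low entry $3$, whose rank jumps from $1$ to $2$. Second, the ``unique completion property'' fails without a partition-preservation requirement: for $k=3$, take the low part consisting of $3$ in position $1$ and $1$ in position $3$, so the high values $\{2,4\}$ fill positions $\{2,4\}$. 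Both placements, $3412$ and $3214$, avoid $123$, and both also avoid $132$; uniqueness fails in both classes. What is true is that there is exactly one completion \emph{whose low part is the prescribed one}, i.e., in which every prescribed low entry keeps rank at most $k-2$ and every high position receives an entry of rank at least $k-1$ (in $3214$ the high value $2$ has become a left-to-right minimum, which is what must be excluded). You flag partition-preservation as your third verification task, but it cannot be a separate afterthought: it has to be built into the statement of the completion property, since otherwise the map ``send $p$ to the unique avoider with the same low part'' is not even well-posed and the counting argument collapses.

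The second gap is that the substance of the proof is deferred rather than carried out. For the $12\cdots k$ class the completion is forced (the high entries all have rank exactly $k-1$, entries of equal rank decrease, so the high values must appear in decreasing order), but for the $12\cdots k(k-1)$ class you never say what the unique completion is, and exhibiting it is the actual content of West's bijection. The paper's rule: fill the high positions from left to right, placing in each one the smallest unused high value that exceeds the nearest entry of rank $k-2$ to its left; one then checks that this greedy placement never gets stuck (it only pushes high values rightward, which cannot drop their rank below $k-1$), that the result avoids $12\cdots k(k-1)$, and that sorting the high entries of a $12\cdots k(k-1)$-avoider into decreasing order inverts it --- this last map is also how the paper establishes your task of matching the two sets of low parts. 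Your list of obligations is the right list, and with the corrected statements the plan does go through, but as written the proposal asserts false lemmas and omits the constructions that prove the true ones; the generating-tree alternative you hold in reserve is likewise only named, not executed. It is an outline of the paper's argument rather than a proof of it.
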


Note that in the special case of $k=3$, the equality of the lemma reduces
to $S_n(123)=S_n(132)$, and the proof we are going to present below reduces
to the classic Simion-Schmidt bijection \cite{simsch}.  

\begin{proof} We construct a bijection $f$ from the set $X_n$ of all
 $12\cdots k$-avoiding $n$-permutations to the set
$Y_n$ of $12\cdots k(k-1)$-avoiding $n$-permutations. 

Let $p\in X_n$. In order to obtain $f(p)$, leave all entries of $p$ that are
 of rank $k-2$ or less in their place.
Rearrange the entries of rank $k-1$ of $p$ as follows. Let $P$ be the set of
 positions of $p$ in which an entry of 
rank $k-1$ is located, and let $R$ be the set of entries of $p$ that are of
 rank $k-1$. Now fill the positions of $P$ with
the entries in $R$ from left to right, so that each position $i\in P$ is filled with the {\em smallest} entry $r$ of $R$  that has not been placed yet and that is larger than the closest entry of rank $k-2$ on the left of position $i$. Let $f(p)$ be
the obtained permutation. Note that $f(p)$ avoids $12\cdots k(k-1)$ since the
existence of such a pattern in $f(p)$ would mean that the last two entries
of that pattern were not placed according to the rule specified above. 

It is easy to see that this definition always enables us to create $f(p)$.
  Indeed, the very existence of $p$ shows that there is at least one way to assign the
 entries of $R$ to the positions in $P$ so that each of these entries will
 have rank $k-1$ or higher. Putting the smallest eligible entry in the
 leftmost available position can only push other entries of $R$ back, which
 will not 
decrease their rank.

Note that if entry $p_i$ of $p$ was of rank $k-2$ or less, then $p_i$ did not
 move in the above procedure, and the 
rank of $p_i$ did not change. If $p_i$ was of rank $k-1$, then $p_i$ may have
 moved, and the rank of $p_i$ as an entry
of $f(p)$ is $k-1$ or higher.  

In order to see that $f$ is a bijection, we show that it has an inverse. Let $q\in Y_n$.  The unique preimage $f^{-1}(q)$ can then be obtained by keeping all entries
of $q$ that are of rank $k-2$ or less fixed, and placing all the remaining entries (whose set is $R$)  in the remaining slots in decreasing
order. It is easy to see that this can always be done, and that in their new positions, each element of $R$ will have
rank $k-1$. Indeed,  fill the available slots from left to right with available elements of $R$ as follows.
In each step, move the largest element of $R$ that has not been placed yet into the leftmost available slot $j$, and
 move each element of $R$ that has been weakly on the right of $j$ one notch to the right. Then in each step, each
 slot $i$ either contains an entry that is larger than what was in $i$ before, or  an entry that
was on the left of $i$ before. Both of these steps result in the new entry in position $i$ having rank $k-1$.
\end{proof}

\begin{example} Let $k=4$. Then $f(893624751)=893624571$. Indeed, the only entries of rank three in
893624751 are 7 and 5, so $f$ rearranges them so that each spot is filled with the smallest entry larger than 
the closest entry of rank two on the left of that spot (in this case, the entry 4).
\end{example}

\section{Alternating Permutations}

Now we turn our attention to alternating permutations, and prove the results announced in the
introduction.

\begin{theorem} \label{firstmain}
Let $k\geq 3$ be an integer, and let $n$ be an {\em even} positive integer.
Then we have $A_n(12\cdots k)=A_n(12\cdots k(k-1))$. 
\end{theorem}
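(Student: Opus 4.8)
The plan is to show that the bijection $f$ of Lemma \ref{west}, together with its inverse, preserves the alternating property when $n$ is even. Granting this, $f$ restricts to a bijection between the alternating $12\cdots k$-avoiding $n$-permutations and the alternating $12\cdots k(k-1)$-avoiding $n$-permutations, which is exactly the asserted equality $A_n(12\cdots k)=A_n(12\cdots k(k-1))$. So the whole argument reduces to a single claim about $f$ and $f^{-1}$, and everything else is bookkeeping.

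The first and decisive step is a structural lemma: when $n$ is even, every entry of rank $k-1$ or more, in an alternating $n$-permutation avoiding $12\cdots k$ (respectively $12\cdots k(k-1)$), must lie in an \emph{even} position. Suppose such an entry $\pi_j$ sits in an odd interior position $j$; then $j$ is a valley, so $\pi_j<\pi_{j+1}$, and an increasing subsequence of length $k-1$ ending at $\pi_j$ extends by $\pi_{j+1}$ to one of length $k$. On the $12\cdots k$-avoiding side this is already a forbidden $12\cdots k$, a contradiction. On the $12\cdots k(k-1)$-avoiding side I would instead combine the chain $a_1<\cdots<a_{k-2}<\pi_j$ ending at $\pi_j$ with the preceding peak value $\pi_{j-1}>\pi_j$: the term $a_{k-2}$ lies strictly to the left of position $j-1$ (it is below $\pi_j<\pi_{j-1}$, so it cannot equal $\pi_{j-1}$), hence $a_1<\cdots<a_{k-2}<\pi_{j-1}$ followed by $\pi_j$ is a forbidden $12\cdots k(k-1)$. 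This is exactly where evenness of $n$ is used: for odd $n$ the last position $n$ is itself a valley with no right neighbour, so a high-rank entry may legitimately sit there, the lemma fails, and $f$ need not preserve alternation, which explains why the identity breaks for $2n+1$.

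Given the lemma, all entries moved by $f$ (or $f^{-1}$) live in even positions, while every odd-position entry has rank at most $k-2$ and is therefore fixed and keeps its rank. Thus the valleys are untouched, and it only remains to check that each even position receiving a new value $v$ (of rank at least $k-1$ in the image $\pi'$) still exceeds both valley neighbours. If $v<\pi'_{j+1}$, appending the right neighbour to an increasing subsequence of length $k-1$ ending at $v$ would give $\pi'_{j+1}$ rank at least $k$, contradicting that it is a fixed entry of rank at most $k-2$; and if $v<\pi'_{j-1}$, the penultimate term of that subsequence, which lies strictly left of position $j-1$ and below $v<\pi'_{j-1}$, would force $\pi'_{j-1}$ to have rank at least $k-1$, the same contradiction. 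Hence every peak of $\pi'$ dominates its neighbours and $\pi'$ is alternating. The hard part is the structural lemma, especially its $12\cdots k(k-1)$-avoiding version, which is what lets $f^{-1}$ preserve alternation and thereby makes the restricted map surject onto the alternating permutations in the target; the peak-preservation step is then a short rank argument that runs identically for both directions.
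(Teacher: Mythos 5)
Your proposal is correct and takes essentially the same route as the paper: restrict West's bijection $f$ to alternating permutations, prove that for even $n$ every entry of rank at least $k-1$ must be a peak in either avoidance class (using the right neighbour on the $12\cdots k$ side, where evenness is needed, and the preceding peak on the $12\cdots k(k-1)$ side, exactly as the paper does), and then check that each displaced entry is again a peak. The only difference is organizational: you run the peak-preservation check uniformly through ranks in the image permutation, relying on the facts from the proof of Lemma \ref{west} that moved entries have rank at least $k-1$ and fixed entries retain rank at most $k-2$ there, whereas the paper argues the $f$ direction via the placement rule (comparison with the nearest entry of rank $k-2$) and the $f^{-1}$ direction via ranks in $q$ itself.
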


\begin{proof}
We claim that the fact that $n$ is even implies that the bijection $f$ of Lemma \ref{west} preserves the
alternating property. In other words, if $p$ is an alternating permutation of length $n$ that avoids $12\cdots k$,
then $f(p)$ is an alternating permutation of length $n$ that avoids $12\cdots k(k-1)$, and vice versa, that is,
if $q$ is an alternating permutation of length $n$ that avoids the pattern $12\cdots k(k-1)$, then $f^{-1}(q)$ is an 
alternating $n$-permutation that avoids $12\cdots k$. 

Let $p=p_1p_2\cdots p_n$ be an alternating, $12\cdots k$-avoiding $n$-permutation, where $n$ is an even positive integer.
 Call the entries of $p$ that
are larger than both their neighbors {\em peaks} and call the entries of $p$ that are smaller than both of their
neighbors {\em valleys}. Let us also say that $p_1$ is a valley and $p_n$ is a peak.  It is clear that all entries of $p$
that are of rank $k-1$ must be peaks. Indeed, {\em because $n$ is even}, all valleys in $p$ are followed by a larger
entry, so a valley of rank $k-1$ would have to be followed by a peak of rank $k$ or higher, which is a contradiction. 

Now let us apply the map $f$ of Lemma \ref{west} to our permutation $p$. As we have seen, that map keeps 
entries of rank $k-2$ or less fixed; it only moves entries of rank $k-1$, which are all peaks. Therefore, in order 
 to prove that
$f(p)$ is an alternating permutation, it suffices to show that if $f$ displaces entry $p_i$ of $p$, that entry $p_i$
will be a peak in $f(p)$.

So let us assume that $f$ moves the peak entry $p_i$ of $p$ to a new position. As all valleys are fixed by $f$,
that new position  is necessarily between two valleys, say $a$ and $b$. We need to prove that $p_i>a$ and $p_i>b$.
(If the new position of $p_i$ is at the very end of $f(p)$, then we only need to prove that $p_i>a$.)

By definition, $p_i$ is {\em larger} than the closest entry $y$ of rank $k-2$ on the left of its new position. If $a=y$,
then this means that $p_i>a$. Otherwise, $a$ is of rank $j\leq k-3$. This implies that $a<y$, otherwise the rank of
$a$ would be higher than the rank of $y$. So $a<y<p_i$, and therefore, $p_i>a$ again. 

Similarly, $b<y$, otherwise $b$ would be of rank at least $k-1$ since $y$ is of rank $k-2$. As $p_i>y$, it follows 
that $p_i>b$, proving our claim that $p_i$ is a peak in $f(p)$. This implies that $f(p)$ is an alternating permutation,
since its entries in even positions are all peaks. 

Now let $q$ be an alternating, $12\cdots k(k-1)$-avoiding $n$-permutation. Consider $f^{-1}(q)$, where $f^{-1}$ is the
 inverse of the bijection $f:X_n\rightarrow
Y_n$, as defined in the proof of Lemma \ref{west}. As we saw in the proof of
 that lemma, $f^{-1}(q)$ is obtained from $q$ by rearranging the entries of $q$ that are of rank $k-1$ or
higher in decreasing order. It is easy to see that all these entries are peaks in $q$. Indeed, if $w$ were a valley
of rank $k-1$ or higher in $q$, then there would be a increasing subsequence $w_1w_2\cdots w_{k-2}w$ in $q$.
If the entry immediately preceding $w$ is $v$, then this would mean that $w_1w_2\cdots w_{k-2}vw$ is
a $12\cdots k(k-1)$-pattern in $q$, which is a contradiction. 

So $f^{-1}$ simply permutes some peaks of $q$ among themselves. Therefore,
 in order to prove that
$f^{-1}(q)$ is alternating, it is again sufficient to prove that each peak
 $q_i$ that is displaced by $f^{-1}$ is
a peak in its new position.  Let us say that $f^{-1}$ moves $q_i$ into a new
 position, where its new neighbors
will be $s$ and $t$. If $q_i$ is larger than the old peak entry $Q$ that was
 between $s$ and $t$ before, 
then of course $q_i$ is a peak in $f^{-1}(q)$. If not, that means that $f^{-1}$
 moved $q_i$ to the {\em right}.
However, that means that $q_i$ must be larger than both $s$ and $t$, otherwise
 one of $s$ and $t$ would have
rank $k$ or more in $q$. Indeed, $q_i$, which is an entry of rank $k-1$, would
be on their left in $q$. That would be a contradiction, 
since $s$ and $t$ are not peaks of $q$, so they are of rank less than $k-1$.   
\end{proof}

\begin{example} Let $n=8$, let $k=4$, and let $p=47581623$. Then the only entries of rank three in $p$ 
are 8, 6 and 3. Rearranging them as described above yields the alternating permutation
$f(p)=47561823$.
\end{example}   

A careful look at the above proof reveals which parts of the argument will carry
over to the case of {\em odd} $n$, and which parts will not. The proof of the fact that 
if $q_i$ is a peak in $q$, then $f^{-1}$ moves $q_i$ into a position where $q_i$ will be a 
peak again, did not use the fact that $n$ was even. So $f^{-1}$ maps alternating permutations
to alternating permutations, even if $n$ is odd. However, if $n$ is odd, and $p$ is alternating,
and $12\cdots k$-avoiding,
then $f(p)$ will not be alternating if and only if there is a entry of rank $k-1$ in $p$ that is not
a peak. It is easy to see that that happens precisely when the {\em last} entry of $p$ is of rank $k-1$,
like in $p=23154$. So we have proved the following Corollary.

\begin{corollary} Let $n$ be an odd positive integer. Then the inequality
\[A_n(12\cdots k) \geq A_n(12\cdots k(k-1))\]
holds. Furthermore, $A_n(12\cdots k)-A_n(12\cdots k(k-1))$ is equal to the
 number
of alternating, $12\cdots k$-avoiding $n$-permutations whose last entry is 
of rank $k-1$. 
\end{corollary}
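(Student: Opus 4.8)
The plan is to read off both assertions from the analysis of the bijection $f$ already carried out, once we record which direction of $f$ continues to respect the alternating property when $n$ is odd. The two facts I would use are: (i) for odd $n$ the map $f^{-1}$ still sends every alternating permutation of $Y_n$ to an alternating permutation of $X_n$, since the argument in the proof of Theorem \ref{firstmain} showing that $f^{-1}$ preserves the alternating property never invoked the parity of $n$; and (ii) for an alternating $p\in X_n$, the image $f(p)$ fails to be alternating precisely when the last entry of $p$ has rank $k-1$, as established in the discussion immediately preceding the corollary.

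First I would establish the inequality. Restricting $f^{-1}$ to the set of alternating permutations in $Y_n$ gives, by fact (i), a map into the set of alternating permutations in $X_n$; this restricted map is injective because $f^{-1}$ is a bijection on all of $Y_n$. An injection from a set of size $A_n(12\cdots k(k-1))$ into a set of size $A_n(12\cdots k)$ immediately yields $A_n(12\cdots k)\geq A_n(12\cdots k(k-1))$.

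For the exact value of the difference, I would identify the image of this injection. Since $f$ and $f^{-1}$ are mutually inverse, an alternating $p\in X_n$ lies in the image of $f^{-1}$ restricted to alternating permutations of $Y_n$ if and only if $f(p)$ is itself alternating: if $p=f^{-1}(q)$ with $q$ alternating then $q=f(p)$ is alternating, and conversely if $f(p)$ is alternating then $f(p)$ is an alternating element of $Y_n$ with $f^{-1}(f(p))=p$. Hence the injection is in fact a bijection between the alternating permutations of $Y_n$ and those alternating permutations $p$ of $X_n$ for which $f(p)$ is alternating. Consequently the quantity
\[
A_n(12\cdots k)-A_n(12\cdots k(k-1))
\]
counts exactly the alternating permutations $p\in X_n$ for which $f(p)$ is \emph{not} alternating. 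By fact (ii), these are precisely the alternating $12\cdots k$-avoiding $n$-permutations whose last entry has rank $k-1$, which is the asserted formula.

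The one point requiring care, and the step I expect to be the main obstacle, is the clean and non-circular identification of the image of $f^{-1}$ in the third paragraph, namely the equivalence ``$p$ lies in the image $\iff f(p)$ is alternating''; once this is in hand, everything else is a counting consequence of facts (i) and (ii), both of which the text preceding the corollary has already supplied.
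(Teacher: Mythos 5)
Your proof is correct and takes essentially the same route as the paper: the paper derives the corollary from precisely your facts (i) and (ii), namely that the argument showing $f^{-1}$ preserves the alternating property never used the parity of $n$, while $f(p)$ fails to be alternating exactly when the last entry of $p$ has rank $k-1$. The image identification you flag as the delicate step (``$p$ lies in the image of $f^{-1}$ restricted to alternating permutations if and only if $f(p)$ is alternating'') is sound and not circular; it is just the counting step the paper leaves implicit when it says ``So we have proved the following Corollary.''
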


Finally, we use a slightly modified version of our argument to prove the
 following
theorem. 

\begin{theorem} 
Let $n$ be any positive integer. Then for all $k$, we have
\[A_n(12\cdots k)=A_n(21\cdots k).\]
\end{theorem}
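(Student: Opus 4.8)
The plan is to mirror the structure of Theorem~\ref{firstmain}, but now the pattern $12\cdots k$ is being compared to $21\cdots k$, in which the \emph{first two} entries are swapped rather than the last two. So instead of rearranging entries of the top rank $k-1$, I would rearrange entries of the \emph{bottom} rank. Let me first record the dual notion: define the \emph{corank} of an entry to be the length of the longest \emph{decreasing} subsequence starting at that entry, or equivalently work with the ordinary rank after applying a reverse-complement symmetry. The point is that $p$ avoids $12\cdots k$ if and only if every entry has rank at most $k-1$, and the entries of rank $1$ are exactly those not preceded by any smaller entry on their left; a $21\cdots k$ pattern is a $12\cdots k$ pattern whose first two entries have been transposed, i.e. it is an occurrence of $12\cdots k$ in which the entry playing the role of ``$1$'' sits to the \emph{left} of a smaller entry.

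The analogue of West's bijection here should fix all entries of rank $2$ or more and rearrange the entries of rank $1$. So the first step is to establish the unrestricted (non-alternating) identity $S_n(12\cdots k)=S_n(21\cdots k)$ by constructing a bijection $g$ that leaves every entry of rank $\geq 2$ in place and permutes the rank-$1$ entries among their positions according to a left-to-right greedy rule analogous to $f$. The key observation is that the rank-$1$ entries of $p$ form a decreasing subsequence occupying a fixed set of positions, and any rearrangement of those values into those positions keeps them of rank $1$ precisely when it respects the relevant order constraints; this is the mirror image of the argument already given for $f$, so I would invoke it by symmetry rather than reprove it. One clean way to make this rigorous is to note that the map ``reverse the permutation and complement its values'' (sending $p_i\mapsto n+1-p_{n+1-i}$) carries the pattern $12\cdots k$ to itself and the pattern $12\cdots k(k-1)$ to $21\cdots k$, after which Lemma~\ref{west} applies directly.

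The second step is to check that this rearrangement preserves the alternating property, and here the crucial difference from Theorem~\ref{firstmain} is that the argument works for \emph{all} $n$, not only even $n$. The reason is the dual of the earlier peak/valley analysis: the entries being moved now have rank $1$, and every entry of rank $1$ in an alternating permutation must be a \emph{valley}. Indeed, a peak of rank $1$ would be larger than its left neighbor yet have no smaller entry to its left among entries of rank $0$, which is impossible for an interior peak regardless of the parity of $n$; the boundary cases $p_1$ and $p_n$ also cause no trouble because $p_1$ is a valley by convention. This asymmetry in parity is exactly why the theorem here is stated for all $n$ while Theorem~\ref{firstmain} required $n$ even.

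The main obstacle I anticipate is verifying cleanly that when a displaced rank-$1$ valley lands in a new position, it remains smaller than both of its new neighbors, so that it is still a valley and the alternating pattern survives. This is the precise mirror of the paragraph in the proof of Theorem~\ref{firstmain} that shows a displaced peak stays a peak, with all inequalities reversed and ``rank $k-2$'' replaced by ``rank $2$''. Concretely, a moved rank-$1$ entry is, by the greedy rule, \emph{smaller} than the closest rank-$2$ entry on one side, and I would argue that its new neighbors (which are peaks, since valleys are fixed) must exceed it, because otherwise one of those neighbors would have rank $1$ rather than being a genuine peak. I expect the cleanest route overall is the reverse-complement reduction: it lets me transport both the bijection and its alternating-preservation property from the already-proved results rather than re-deriving every inequality, so the real work is just confirming that reverse-complement sends alternating permutations of length $n$ to alternating permutations of length $n$ (which it does exactly when the length works out, and one must track the parity/endpoint conventions carefully).
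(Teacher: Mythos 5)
Your proposal has two genuine gaps, one in each of the routes you sketch, and the route you call ``cleanest'' cannot prove the theorem at all for odd $n$ --- which is precisely the case that gives this theorem its content. The reverse-complement reduction does carry $12\cdots k$ to itself and $12\cdots k(k-1)$ to $21\cdots k$, so it proves $S_n(12\cdots k)=S_n(21\cdots k)$; but for the alternating statement it breaks twice when $n$ is odd. First, reverse-complement sends up-down alternating permutations of odd length to \emph{down-up} alternating ones (check $132\mapsto 213$), so it does not identify $A_n(21\cdots k)$ with $A_n(12\cdots k(k-1))$. Second, and fatally, the equality you would be reducing to, $A_n(12\cdots k)=A_n(12\cdots k(k-1))$, is \emph{false} for odd $n$: by the Corollary following Theorem~\ref{firstmain}, the difference $A_n(12\cdots k)-A_n(12\cdots k(k-1))$ counts alternating $12\cdots k$-avoiders whose last entry has rank $k-1$, and this is positive in general (e.g.\ $23154$ for $k=4$, $n=5$). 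So transporting Theorem~\ref{firstmain} through the symmetry can only re-prove the even case; for odd $n$ one must verify directly that the conjugated bijection preserves the alternating property, which is exactly what the paper does and what your proposal omits.

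Your direct construction is also not the ``mirror image'' of $f$ that you claim, so the appeal to symmetry in place of a proof is invalid. Conjugating West's $f$ by reverse-complement yields the map the paper calls $g$: it fixes the entries whose longest \emph{increasing} subsequence \emph{starting} at them (the paper's co-rank; note this is not your ``longest decreasing subsequence starting at that entry'') has length at most $k-2$, and it rearranges the entries of co-rank $k-1$. It does \emph{not} rearrange the rank-$1$ entries (left-to-right minima); these are genuinely different sets --- in $4213$ with $k=3$ the co-rank-$2$ entries are $2,1$ while the rank-$1$ entries are $4,2,1$. Moreover, ``fix rank $\geq 2$, greedily rearrange rank-$1$ entries'' cannot be inverted the way $f$ is: a moved rank-$1$ entry may acquire rank $\geq 2$ in the image (for $k=3$ the only admissible image of $213$ is $123$, in which the moved entry $2$ has rank $2$, and the fixed entry $3$ changes rank as well), so from the image one cannot tell which entries were moved. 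The part of your proposal that survives is the right idea in the wrong clothes: the paper's key observation is that the entries moved by the \emph{correct} dual map, those of co-rank $k-1$, are always valleys, because every peak is immediately preceded by a smaller entry (the first entry being a valley by convention), and that smaller entry would then have co-rank at least $k$; this holds for every $n$, with no parity hypothesis, and is what replaces your rank-$1$ valley argument.
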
 

\begin{proof}
The proof is similar to the proof of Theorem \ref{firstmain}. First, we 
construct a bijection $g$ that proves the equality 
\[S_n(12\cdots k) = S_n(213\cdots k)\]
for every $n$. To this end, let us say that an entry of a permutation is of
{\em co-rank} $i$ if the longest increasing subsequence {\em starting} in that
entry has length $i$. If $p\in X_n$, then we define $g(p)$ as follows. 
Let $g$ keep all entries of $p$ that are of co-rank $k-2$ or less fixed. 
Fill the remaining slots with the remaining entries from right to left, so that
in position $j$, we always put the {\em largest} remaining entry that is 
smaller than the closest entry of co-rank $k-2$ on the right of $j$. 
Then $g$ is a bijection from $X_n$ to the set $Z_n$ of $213\cdots k$-avoiding
permutations of length $n$ as can be proved in a way analogous to the proof
of Lemma \ref{west}. 

Next, we claim that $g$ preserves the alternating property.  In order to prove
this, we point out that if $p\in X_n$, then entries of $p$ of co-rank $k-1$
 are necessarily valleys, since
if they were peaks, they would be immediately preceded by a valley, which would
have co-rank at least $k$, a contradiction. {\em Note that unlike in the proof
of Theorem \ref{firstmain}, there is no need for parity restrictions here.} 
The rest of the proof is analogous to that of Theorem \ref{firstmain}.
\end{proof}

\section{Further directions}
As we mentioned, there is a large collection of conjectures in \cite{lewis} claiming that $A_n(q)=A_n(q')$ for
some patterns $q$ and $q'$. Some of these conjectures are for all integers $n$, some others for integers of a given
parity. In many cases, the corresponding equalities $S_n(q)=S_n(q')$ are known to be true. As the results of this
paper show, sometimes the bijection that proves an equality for {\em all} pattern avoiding permutations preserves
the alternating property, and hence can be used to prove the corresponding equality for alternating permutations.
The question is, of course, exactly when  we can do this.

\end{document}